\documentclass{qam-l}

\copyrightinfo{2019}{UT-Battelle}

\usepackage{local-math}
\usepackage{cite}
\usepackage{graphicx}
\usepackage{algorithm}
\usepackage{algorithmic}
\usepackage{xfrac}
\usepackage{placeins}
\usepackage{amsthm}
\usepackage{amsmath}

\makeatletter
\newcommand{\newrestatetheorem}[2]{%
\newtheorem*{restate@#1}{\restate@title}%
\newenvironment{restate#1}[2][]%
{\def\restate@title{#2 \ref{##2}}%
\begin{restate@#1}[##1]}%
{\end{restate@#1}}}
\makeatother


\newtheorem{theorem}{Theorem}[section]
\newtheorem{corollary}[theorem]{Corollary}
\newrestatetheorem{theorem}{Theorem}
\newrestatetheorem{corollary}{Corollary}
\newtheorem{lemma}[theorem]{Lemma}

\theoremstyle{definition}

\numberwithin{equation}{section}

\newcommand{\avg}{\overline}
\newcommand{\ava}{\avg{\vect a}}
\newcommand{\avb}{\avg{\vect b}}
\newcommand{\avx}{\avg{\vect x}}

\newcommand{\meana}{\avg{a}}
\newcommand{\avy}{\avg{\vect y}}

\newcommand{\vone}{\vect 1^T}

\newcommand{\avaonen}{\ava \, \vone_n}

\newcommand{\LabAddress}{%
  Oak Ridge National Laboratory, P.O.\ Box 2008, MS6054, Oak Ridge, TN, 37831}

\begin{document}

\title{Efficiently updating a covariance matrix and its LDL~decomposition}
\author{Don March}
\address{\LabAddress}
\email{marchdd@ornl.gov}
\thanks{%
  The authors wish to thank Dr.~Wayne Barrett for his helpful comments after
  reading an early draft of this paper.}

\author{Vandy Tombs}
\address{\LabAddress}
\email{tombsvj@ornl.gov}

\subjclass[2010]{Primary 15A23, 15A24, 15B99, 65F30; Secondary 62-04, 68W27}
\keywords{Covariance matrix, rank-k updates and downdates, online statistical
  algorithms}


\maketitle

\begin{abstract}
  Equations are presented which efficiently update or downdate the covariance
  matrix of a large number of $m$-dimensional observations. Updates and
  downdates to the covariance matrix, as well as mixed updates/downdates, are
  shown to be rank-$k$ modifications, where $k$ is the number of new
  observations added plus the number of old observations removed. As a result,
  the update and downdate equations decrease the required number of
  multiplications for a modification to $\Theta((k+1)m^2)$ instead of
  $\Theta((n+k+1)m^2)$ or $\Theta((n-k+1)m^2)$, where $n$ is the number of
  initial observations. Having the rank-$k$ formulas for the updates also allows
  a number of other known identities to be applied, providing a way of applying
  updates and downdates directly to the inverse and decompositions of the
  covariance matrix. To illustrate, we provide an efficient algorithm for
  applying the rank-$k$ update to the LDL decomposition of a covariance matrix.
\end{abstract}

\section{Introduction}
\label{sec:introduction}

Methods for analyzing multidimensional signals frequently involve the
calculation of the covariance matrix of the observation vectors in the sample
set. Being the single parameter which describes the spread of data in a
multivariate Gaussian distribution, the covariance matrix is as fundamental as
the standard deviation is for univariate data, and it frequently plays an
analogous role in analysis.

For example, each pixel in a hyperspectral image records a spectrum that is
sampled across many different wavelengths. Many of the formulas used to analyze
hyperspectral imagery model the background of a scene (i.e.,~the non-target
pixels) as a multivariate Gaussian distribution; the covariance matrix is used
to determine the significance of a deviation from the mean background spectrum.
Equations used in hyperspectral imagery analysis that involve the covariance
matrix include methods for anomaly detection, supervised and unsupervised
classification, and sub-pixel target detection
\cite{manolakis_hyperspectral_2003}.

It is sometimes useful to recalculate the covariance matrix after including
additional observation vectors (an \emph{update}) or after removing a subset of
the original observation vectors (a \emph{downdate}). For example,
\cite{kucuk_anomaly_2015} presents variations of the Reed-Xiaoli~(RX) anomaly
detector which use a local covariance matrix; rather than modeling the
background of a scene using all pixels, the background is calculated using the
pixels within a sliding window. As this window slides one step, many of the
pixels inside the previous window boundary will be inside the new window
boundary, but some additional pixels are included and some of the former pixels
are dropped. The result is a \emph{mixed update/downdate} to the statistics
calculated over the sliding window. In \cite{caefer_improved_2008}, similar
methods that require updating the covariance matrix are used to improve the
performance of target detectors.

While recalculating the covariance matrix can improve algorithm accuracy, a
higher computational cost is incurred as well. Suppose that $X$ is an $m\times
n$ matrix containing $n$ observation vectors, each with $m$ features; let
$\avg{\vect x}$ denote the mean column vector. The sample covariance matrix%
\footnote{An unqualified \emph{covariance matrix} is used throughout to refer to
  the sample covariance matrix.} %
can be calculated as %
\begin{equation*}
  \label{eq:covariance-matrix}
S = %
  \frac{1}{n-1}%
  \paren{X-\avg{\vect{x}}\, \vect 1_n^T}%
  \paren{X-\avg{\vect{x}}\, \vect 1_n^T}^T%
  \hspace{-4pt},\hspace{4pt}
\end{equation*}
which expresses removing the sample mean from the data matrix and then
multiplying the resulting matrix by its transpose. As a result, the covariance
matrix is symmetric positive semidefinite, and positive definite if and only if
$X$ has rank $m$\ (see \cite{golub_matrix_computations_1996}). The number of
operations%
\footnote{When providing the operation counts, we mean addition, subtraction,
  multiplication, and division. When reporting the number of additions and
  multiplications separately, subtractions are counted as addition and divisions
  are counted as multiplication.} %
necessary to calculate the matrix product is $nm^2 - (m^2+m)/2$. Thus, if we add
$k$ observations to the sample set, or remove $k$ observations, then the matrix
product requires $(n\pm k)m^2 - (m^2+m)/2$ operations.

In this paper, we derive update and downdate equations that allow efficient
updates to the covariance matrix and its matrix decompositions. An analogous
procedure is the familiar rule that allows the average of a set to be quickly
updated as observations are added to, or removed from, the sample. We state the
rule both for motivation and since it is used throughout the paper.

\begin{lemma}
\label{lemma:mean-update}
Let $X$ and $Y$ be multisets of real numbers and let $\avg x$ and $\avg y$
denote the arithmetic means of those sets. Then the updated mean is given by
\begin{equation}
\avg{\paren{X \uplus Y}} = %
  \frac{\abs{X}\avg x + \abs Y \avg y}{\abs X + \abs Y}.
\end{equation}
Additionally, if $Y \subset X$, the downdated mean is
\begin{equation}
  \avg{\paren{X\backslash Y}} = %
  \frac{\abs{X}\avg x - \abs Y \avg y}{\abs X - \abs Y}.
\end{equation}
\end{lemma}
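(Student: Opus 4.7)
The plan is to prove both identities directly from the definition of the arithmetic mean, since for multisets of real numbers the key invariant is that $|X|\,\avg x = \sum_{x\in X} x$, i.e., the size times the mean equals the total. Once this bookkeeping identity is in hand, both equations reduce to the additivity of sums over multiset union and multiset difference.

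First I would fix notation: write $S(Z) = \sum_{z \in Z} z$ for a multiset $Z$, so that $\avg z = S(Z)/|Z|$ whenever $|Z|>0$. For the update, I would observe that because $\uplus$ denotes multiset union (keeping multiplicities, so elements are never identified), we have both $|X \uplus Y| = |X| + |Y|$ and $S(X \uplus Y) = S(X) + S(Y)$. Substituting $S(X) = |X|\avg x$ and $S(Y) = |Y|\avg y$ and dividing by $|X|+|Y|$ yields the first equation. For the downdate, the hypothesis $Y \subset X$ (as multisets) ensures that $X \setminus Y$ is well defined with $|X \setminus Y| = |X| - |Y|$ and $S(X \setminus Y) = S(X) - S(Y)$; the same substitution then gives the second equation. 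A brief side remark that the downdate requires $|X|>|Y|$ to keep the denominator positive (so that the resulting multiset is nonempty and the mean is defined) seems worth including.

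The main obstacle is essentially notational rather than mathematical: the only thing to be careful about is that $\uplus$ and $\setminus$ are interpreted as multiset operations, so that repeated values contribute each occurrence separately to the sum and the cardinality. Once that convention is stated, both equalities follow from one line of arithmetic each, so no further machinery is needed.
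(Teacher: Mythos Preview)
Your proposal is correct: the argument via $S(Z)=|Z|\,\avg z$ together with additivity of sums and cardinalities over multiset union and difference is exactly the right justification, and your caveat that the downdate requires $|Y|<|X|$ is apt. Note that the paper itself does not supply a proof of this lemma at all---it is presented as a familiar, motivating fact and left unproven---so there is no alternative approach to compare against; your direct derivation is entirely in the spirit of what the authors intend.
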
 %
\noindent While a naive calculation of the updated or downdated mean would
require ${(n \pm k - 1)m}$ additions and $m$~multiplications, the original mean
can be reused to avoid much of the work, arriving at the same result after
$km$~additions and $2m$~multiplications. Similarly, the update and downdate
equations stated below allow recalculating the covariance matrix using on the
order of $(k+1)m^2$ operations instead of $(n\pm k)m^2$.

Another familiar procedure is the online update of the covariance of two
variables. A single-pass algorithm for updating the covariance of a dataset was
presented by Bennett, et al.\ in \cite{bennett}: given a dataset $X_1$ of
ordered pairs $x = (u,v)$ with mean $(\avg u, \avg v)$, the covariance of the
updated dataset $X_2 = X_1 \cup \{(s,t)\}$ is given by
\begin{equation}
\label{eq:bennett-update}
\operatorname{Cov}(X_2) = \operatorname{Cov}(X_1) + \tfrac{n-1}{n}(s-\avg u)(t - \avg v).
\end{equation}

In this paper we develop updates and downdates to the covariance matrix that
take the form of rank-$k$ modifications; that is, given a $m\times m$ covariance
matrix $S_1$, recalculating the covariance matrix after adding or removing $k$
vectors to the dataset can be expressed as $S_2 = \alpha S_1 + \beta KK^T$ where
$K$ is a $m \times k$ matrix and $\alpha$ and $\beta$ are scalars. A covariance
matrix could also be updated by applying covariance and standard deviation
updates to the individual entries of the matrix, and such an update does perform
substantially fewer operations than the recalculation of the covariance matrix
using the updated dataset. The benefit of expressing the operation as a rank-$k$
modification is twofold: first, calculating $KK^T$ is an efficient, stable, and
easily parallelizable operation; and second, rank-$k$ modifications can
lead to similarly efficient updates to the inverse or matrix decompositions of
the original matrix.

The following theorem and corollary are the main results of the paper, along
with Theorem~\ref{thm:rank-k-mixed}, which combines the operations enabled by
Theorem~\ref{thm:downdate} and Corollary~\ref{cor:update} into a rank-$k$ mixed
update/downdate.

\begin{theorem}[Rank-$k$ covariance matrix downdate]
\label{thm:downdate}
Let $X_1$ be an $m\times n$ matrix with covariance matrix $S_1$ and let $X_2$ be
formed by deleting $k$ columns of $Y$ from $X_1$. Then the covariance matrix of
$X_2$ is given by the rank-$k$ downdate%
\footnote{These equations assume that the sample covariance matrix is
  calculated with Bessel's correction, that is, dividing variances and
  covariances by $n-1$ when there are $n$ observations sampled. If the
  correction is not applied, the coefficients on $S_2$ and $S_1$ can be
  replaced with $(n-k)$ and $n$, respectively. The fraction in the radical
  remains unchanged.} %
\begin{equation*}
\label{eq:downdate}
\paren{n-k-1}S_2 = \paren{n-1}S_1 - KK^T
\end{equation*}
where
\begin{equation*}
K = Y - \paren{\avy \pm \sqrt{\frac{n}{n-k}}\paren{\avy - \avx_1}\hspace{-2pt}} \vone_k.\hspace{2pt}
\end{equation*}
\end{theorem}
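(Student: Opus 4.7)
The plan is to begin from the identity $(n-1)S_1 = X_1 X_1^T - n \avx_1 \avx_1^T$ and its analogue $(n-k-1)S_2 = X_2 X_2^T - (n-k) \avx_2 \avx_2^T$, both of which follow by expanding the definition of the sample covariance matrix. After reordering the columns of $X_1$ so that $X_1 = [X_2 \mid Y]$ (which changes neither the mean nor the covariance), the Gram parts split as $X_1 X_1^T = X_2 X_2^T + YY^T$. Subtracting the two identities and applying the downdate form of Lemma~\ref{lemma:mean-update}, namely $\avx_2 = (n\avx_1 - k\avy)/(n-k)$, reduces the theorem to verifying
\[
YY^T - n \avx_1 \avx_1^T + (n-k)\avx_2 \avx_2^T = KK^T.
\]

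Next I would pull the mean out of $Y$ by writing $YY^T = (Y - \avy \mathbf{1}_k^T)(Y - \avy \mathbf{1}_k^T)^T + k \avy \avy^T$ and then expand $(n-k)\avx_2\avx_2^T$ after substituting the formula for $\avx_2$. Collecting the resulting rank-one terms in $\{\avx_1, \avy\}$ together with the $k\avy\avy^T$ and $-n\avx_1\avx_1^T$ contributions, the coefficients combine into a perfect square and the three rank-one pieces collapse to the single outer product $\tfrac{nk}{n-k}(\avy - \avx_1)(\avy - \avx_1)^T$. This algebraic collapse is the most bookkeeping-heavy part of the proof, but it is mechanical once the coefficients of each monomial are written out.

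It then remains to recognize
\[
(Y - \avy \mathbf{1}_k^T)(Y - \avy \mathbf{1}_k^T)^T + \tfrac{nk}{n-k}(\avy - \avx_1)(\avy - \avx_1)^T
\]
as $KK^T$. Expanding $K = (Y - \avy \mathbf{1}_k^T) \mp \sqrt{n/(n-k)}\,(\avy - \avx_1)\mathbf{1}_k^T$ produces two diagonal terms and two cross terms. The crucial observation is that each cross term carries the factor $(Y - \avy \mathbf{1}_k^T)\mathbf{1}_k = Y\mathbf{1}_k - k\avy$, which vanishes by the definition of $\avy$; this is precisely why the $\pm$ in the statement is a free choice. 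The two diagonal terms then match after using $\mathbf{1}_k^T \mathbf{1}_k = k$, which supplies exactly the factor $n/(n-k) \cdot k = nk/(n-k)$ required.

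I expect the main obstacle to be the middle step, since one must track four rank-one terms coming out of $(n-k)\avx_2\avx_2^T$ and combine them with the two remaining rank-one terms without dropping a coefficient; once the sum is recognized as a perfect square in $(\avy-\avx_1)$, everything else is a short verification.
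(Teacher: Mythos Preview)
Your proof is correct. Both you and the paper reduce $(n-1)S_1 - (n-k-1)S_2$ to a centered Gram matrix of $Y$ plus a rank-one term in $\avy - \avx_1$, then factor this sum as $KK^T$; the difference lies in the centering choice and in how the factoring is carried out. The paper centers $Y$ at $\avx_1$, arriving at $BB^T + \tfrac{k^2}{n-k}\,\avb\,\avb^T$ with $B = Y - \avx_1\vone_k$ and $\avb = \avy - \avx_1$; because $B$ does not have column mean zero, the cross terms in $(B + c\,\avb\,\vone_k)(B + c\,\avb\,\vone_k)^T$ survive, and the paper invokes Lemma~\ref{lemma:4} to reduce the factoring to the quadratic $k(c^2 + 2c) = k^2/(n-k)$. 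You instead center $Y$ at its own mean $\avy$, so the columns of $Y - \avy\,\vone_k$ sum to zero and the cross terms vanish outright, which makes the $\pm$ freedom immediately transparent and bypasses the quadratic entirely. Your route is a little more direct for this particular statement, while the paper's parametrized lemma has the advantage of yielding alternative factorizations from the same computation, such as the asymmetric form~\eqref{eq:mixed-update} obtained by choosing unequal parameters $s$ and $t$.
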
 %

\begin{corollary}[Rank-$k$ covariance matrix update]\label{cor:update}
Let $X_1$ be an $m\times n$ matrix with covariance matrix $S_1$ and let $Y$
be an $m \times k$ matrix.  Let $X_2$ be formed by appending the columns of $Y$
to $X_1$. Then the covariance matrix of $X_2$ is given by the rank-$k$
update
\begin{equation*}
\paren{n+k-1}S_2 = \paren{n-1}S_1 + KK^T
\end{equation*}
where
\begin{equation*}
K = Y - \paren{\avy \pm \sqrt{\frac{n}{n+k}}\paren{\avy - \avx_1}\hspace{-2pt}} \vone_k.
\end{equation*}
\end{corollary}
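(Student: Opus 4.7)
The plan is to deduce the corollary directly from Theorem~\ref{thm:downdate} by reversing the direction of modification. I would let $\tilde X := X_2 = [X_1\ Y]$, which has $\tilde n := n+k$ columns and covariance $S_2$; then $X_1$ is precisely the matrix obtained by deleting the $k$ columns of $Y$ from $\tilde X$. Applying Theorem~\ref{thm:downdate} with starting matrix $\tilde X$ and downdate size $\tilde k = k$ yields
$$\paren{(n+k)-k-1} S_1 = \paren{(n+k)-1} S_2 - \tilde K \tilde K^T,$$
i.e.\ $(n-1) S_1 = (n+k-1) S_2 - \tilde K \tilde K^T$, which rearranges to the claimed identity $(n+k-1) S_2 = (n-1) S_1 + \tilde K \tilde K^T$. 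So the scalar identity is immediate.

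The only real work is to reconcile the rank-$k$ factor $\tilde K$ produced by the theorem, which is naturally expressed in terms of $\avx_2$ (the mean of the starting matrix $\tilde X$), with the $K$ stated in the corollary, which is expressed in terms of $\avx_1$. After substituting the values of $\tilde n$ and $\tilde k$, the theorem gives
$$\tilde K = Y - \paren{\avy \pm \sqrt{\tfrac{n+k}{n}}(\avy - \avx_2)} \vone_k,$$
so it suffices to show the bracketed expression equals $\avy \pm \sqrt{n/(n+k)}\,(\avy - \avx_1)$. I would invoke Lemma~\ref{lemma:mean-update} applied to $X_2 = X_1 \uplus Y$, which gives $\avx_2 = (n\avx_1 + k\avy)/(n+k)$ and hence $\avy - \avx_2 = \tfrac{n}{n+k}(\avy - \avx_1)$. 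Substituting collapses the radical via $\sqrt{(n+k)/n}\cdot n/(n+k) = \sqrt{n/(n+k)}$, so $\tilde K = K$, with the $\pm$ ambiguity inherited unchanged.

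I do not expect any genuine obstacle here; the ``hard part,'' such as it is, is the little cancellation in the preceding paragraph, where the factor $n/(n+k)$ introduced by the mean-update rule exactly converts the theorem's $\sqrt{(n+k)/n}$ coefficient in front of $(\avy-\avx_2)$ into the corollary's $\sqrt{n/(n+k)}$ coefficient in front of $(\avy-\avx_1)$. Because the structural step is just a relabeling of Theorem~\ref{thm:downdate} and the algebraic step is this single identity, the update formula follows as a genuine corollary rather than requiring an independent argument.
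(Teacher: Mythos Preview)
Your proposal is correct and matches the paper's own proof essentially line for line: the paper also swaps the roles of $X_1$ and $X_2$ in Theorem~\ref{thm:downdate} to obtain $K$ in terms of $\avx_2$, and then invokes Lemma~\ref{lemma:mean-update} to rewrite it in terms of $\avx_1$. Your explicit computation of $\avy-\avx_2=\tfrac{n}{n+k}(\avy-\avx_1)$ and the resulting collapse of the radical is exactly the ``simplifying'' step the paper alludes to but does not spell out.
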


The rest of the paper is organized as follows: In Section~\ref{section:lemmas},
we state the necessary notation and lemmas for the proof of the later theorems.
Section~\ref{section:update-theorems} contains the derivation and proof of the
equation for a rank-$k$ covariance matrix update or downdate.
Section~\ref{section:mixed} states and proves the rank-$k$ mixed update/downdate
of the covariance matrix. Finally, in Section~\ref{section:LDL}, we show how the
rank-$k$ updates provide an efficient method for updating covariance matrix
factorizations.

\section{Notation and Lemmas}
\label{section:lemmas}
If $X$ is an $m\times n$ matrix, we write the mean column of $X$ as %
$\avg{\vect x}$ and use $\avg{x}_i$ to denote the $i$th entry in the mean
column. Let $\vect{1}_n$ be a column vector with $n$ entries that all equal $1$.
We frequently use this in expressions such as $X-\vect{a} \vone_n$ to denote
subtracting the column vector $\vect{a}$ from every column of $X$.

There are several key observations used frequently in later proofs.

\begin{lemma}
\label{lemma:2}
Let $A_1$ be an $m\times n$ matrix and let $B$ be an $m\times k$ matrix. If $A_2$
is formed by appending the columns of $B$ to $A_1$ then
\begin{equation}
A_2A_2^T = A_1A_1^T + BB^T.
\end{equation}
If the columns of $B$ are a subset of the columns of $A_1$, and $A_2$ is formed by
deleting the columns of $B$ from $A_1$ then
\begin{equation}
A_2A_2^T = A_1A_1^T - BB^T.
\end{equation}
\end{lemma}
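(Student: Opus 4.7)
The plan is to exploit the identity $AA^T = \sum_{i=1}^{n} a_i a_i^{T}$, where $a_1,\ldots,a_n$ are the columns of $A$. This follows immediately from the definition of matrix multiplication and, crucially, shows that $AA^T$ depends only on the \emph{multiset} of columns of $A$, not on their order.

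For the append case, I would write $A_2$ (after a harmless column permutation) as the block matrix $[\,A_1 \mid B\,]$, so that
\[
A_2 A_2^T \;=\; \begin{bmatrix} A_1 & B \end{bmatrix}
\begin{bmatrix} A_1^T \\ B^T \end{bmatrix}
\;=\; A_1 A_1^T + B B^T
\]
by block multiplication. For the delete case I would reduce to the append case: since the columns of $A_1$ are precisely the columns of $A_2$ together with the columns of $B$, the append identity applied with the roles reversed yields $A_1 A_1^T = A_2 A_2^T + B B^T$, and subtracting gives the stated formula.

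There is no real obstacle here; the proof is essentially a one-line block-multiplication calculation. The only conceptual point worth stating explicitly is the invariance of $AA^T$ under column permutations of $A$, which is what justifies treating ``appending the columns of $B$'' and ``deleting the columns of $B$'' as mutually inverse operations at the level of the product $AA^T$, even when the columns of $B$ are interleaved among those of $A_1$.
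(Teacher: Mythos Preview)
Your proof is correct and essentially the same as the paper's. The only cosmetic difference is that the paper verifies the \emph{downdate} identity directly by writing out the entries as sums and then obtains the update by swapping the roles of $A_1$ and $A_2$, whereas you verify the \emph{update} identity directly via block multiplication and then obtain the downdate by the same swap; the underlying observation (that $AA^T$ is the sum of the column outer products, hence independent of column order) is identical in both arguments.
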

\begin{proof}
The downdate equality can be checked for each entry of the matrix product. Using
$a_{ij}$ to denote the $i,j$ entry in $A_1$,
\begin{align*}
A_2A_2^T
= \sparen{\sum_{\substack{1 \le k \le n, \\ \text{if } \vect a_k \notin B}} a_{ik}a_{jk}}
&= \sparen{
  \sum_{1\le k \le n}a_{ik}a_{jk}
  - \sum_{\substack{1 \le k \le n, \\ \text{if } \vect{a}_k  \in B}}a_{ik}a_{jk}
}     \\
&= \sparen{
  \sum_{1\le k \le n}a_{ik}a_{jk}}
  - \sparen{\sum_{\substack{1 \le k \le n, \\ \text{if } \vect{a}_k  \in B}}a_{ik}a_{jk}
}\\
&= A_1A_1^T - BB^T.
\end{align*}
Showing the update version is similar, but it also follows directly from
swapping the roles of $A_2$ and $A_1$.
\end{proof}

\begin{lemma}
\label{lemma:3}
Let $A$ be an $m\times n$ matrix with mean column vector $\avg{\vect{a}}$. Then
for any compatible column vector $\vect{x}$,
\begin{equation}
\label{eq:l31}
A\paren{\vect{x}\vect 1_n^T}^T
= \paren{\avg{\vect{a}}\hspace{2pt}\vect 1_n^T}\paren{\vect{x}\vect 1_n^T}^T
= n\avg{\vect{a}}\vect{x}^T
\end{equation}
and
\begin{equation}
\label{eq:l32}
\paren{\vect{x}\vect 1_n^T}A^T
= \paren{\vect{x}\vect 1_n^T}\paren{\avg{\vect{a}}\hspace{2pt}\vect 1_n^T}^T
= n\vect{x}\avg{\vect{a}}^T
\end{equation}

\end{lemma}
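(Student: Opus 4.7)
The plan is to prove this by direct matrix arithmetic, leveraging two elementary identities: first, $\vect 1_n^T \vect 1_n = n$; and second, $A\vect 1_n = n\avg{\vect a}$, which is just the definition of the mean column (the sum of the columns of $A$ divided by $n$, rearranged). Everything else will follow by chaining these together with a transpose.

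First I would rewrite $(\vect x \vect 1_n^T)^T = \vect 1_n \vect x^T$, so that the left-hand side of \eqref{eq:l31} becomes $A \vect 1_n \vect x^T$. Applying $A\vect 1_n = n \avg{\vect a}$ immediately yields the rightmost expression $n\avg{\vect a}\vect x^T$. For the middle expression, I would compute
\[
\paren{\avg{\vect a}\vect 1_n^T}\paren{\vect 1_n \vect x^T}
= \avg{\vect a}\paren{\vect 1_n^T \vect 1_n}\vect x^T
= n\avg{\vect a}\vect x^T,
\]
giving equality of all three expressions in \eqref{eq:l31}.

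For \eqref{eq:l32}, the cleanest approach is to transpose \eqref{eq:l31}. Taking transposes of all three sides and observing that $\paren{n\avg{\vect a}\vect x^T}^T = n\vect x \avg{\vect a}^T$, and that the mean column $\avg{\vect a}$ of $A$ is of course also the mean column of $A$ (not $A^T$), gives the second pair of equalities verbatim. Alternatively, one can redo the direct calculation: $\paren{\vect x \vect 1_n^T} A^T = \vect x (A \vect 1_n)^T = \vect x (n\avg{\vect a})^T = n\vect x \avg{\vect a}^T$, with the middle equality handled as before using $\vect 1_n^T \vect 1_n = n$.

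There is no real obstacle here; the lemma is purely a bookkeeping statement about how $\vect 1_n^T$ and the mean column interact. The only thing to be careful about is keeping track of transposes so that the two identities $A\vect 1_n = n\avg{\vect a}$ and $\vect 1_n^T \vect 1_n = n$ are applied in the right order.
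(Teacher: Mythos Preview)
Your proposal is correct and follows essentially the same approach as the paper: both verify the first chain of equalities directly from the definition of the mean column and then obtain the second by transposing. The only cosmetic difference is that the paper writes out the entrywise sums $\bigl[\sum_k a_{ik}x_j\bigr]$ explicitly, whereas you package the same computation as the matrix identities $A\vect 1_n = n\avg{\vect a}$ and $\vect 1_n^T\vect 1_n = n$; the underlying argument is identical.
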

\begin{proof}
The lemma is a result of
\begin{align*}
A\paren{\vect{x}\vect 1_n^T}^T &=
       \Bigg[\sum_{k=1}^na_{ik}x_j\Bigg] \\
       &=\Bigg[\paren{\sum_{k=1}^n a_{ik}}x_j\Bigg]
       =\Bigg[ n\meana_ix_j \Bigg]
         = n\ava\vect{x}^T
\intertext{and, where $\vect{w}$ is any compatible vector,}
\paren{\vect{w}\vect 1_n^T}\paren{\vect{x}\vect 1_n^T}^T
       &= \Bigg[ \sum_{i=1}^n w_ix_j \Bigg]
       = \Bigg[ n w_i x_j \Bigg]
       = n \vect{w}\vect{x}^T.
\end{align*}
Then setting $\vect{w} = \ava$ shows the second equality in (\ref{eq:l31}),
and~(\ref{eq:l32}) follows from transposing both sides of (\ref{eq:l31}).
\end{proof}

\begin{lemma}
\label{lemma:4}
Let $A$ be an $m\times n$ matrix, let $\ava$ be the mean column of $A$, and let
$s$ and $t$ be real numbers. Then
\begin{equation}
\label{eq:l41}
\paren{A + s\paren{\avaonen}}\paren{A + t\paren{\avaonen}}^T
= AA^T + n\paren{st + s + t}\ava\,\ava^T
\end{equation}
and, in particular, if $s = t = -1$ then
\begin{equation}
\label{eq:l42}
\paren{A - {\avaonen}}\paren{A - {\avaonen}}^T
= AA^T - n\ava\,\ava^T.
\end{equation}
\end{lemma}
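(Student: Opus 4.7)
The plan is to expand the left-hand side of equation (\ref{eq:l41}) by distributivity, producing four summands: the main term $AA^T$, two cross terms $tA(\avaonen)^T$ and $s(\avaonen)A^T$, and a quadratic term $st(\avaonen)(\avaonen)^T$.

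Each of the three non-$AA^T$ summands is a direct instance of the identities already proved in Lemma~\ref{lemma:3}. Setting $\vect{x} = \ava$ in equation (\ref{eq:l31}) gives $A(\avaonen)^T = n\ava\,\ava^T$, and setting $\vect{x} = \ava$ in equation (\ref{eq:l32}) gives $(\avaonen)A^T = n\ava\,\ava^T$. For the quadratic term, the intermediate identity $(\vect{w}\vect{1}_n^T)(\vect{x}\vect{1}_n^T)^T = n\vect{w}\vect{x}^T$ from the proof of Lemma~\ref{lemma:3}, taken with $\vect{w} = \vect{x} = \ava$, yields $(\avaonen)(\avaonen)^T = n\ava\,\ava^T$ as well.

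Substituting these three simplifications back in and collecting their scalar prefactors $t$, $s$, and $st$ produces $AA^T + n(s + t + st)\ava\,\ava^T$, which is exactly (\ref{eq:l41}). Equation (\ref{eq:l42}) then follows immediately by specializing to $s = t = -1$, where $s + t + st = -1$.

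The proof is essentially mechanical once Lemma~\ref{lemma:3} is in hand, so there is no genuine obstacle. The only care required is recognizing that the quadratic cross-term is itself handled by Lemma~\ref{lemma:3} applied to the rank-one matrix $\avaonen$ (whose mean column is $\ava$), and verifying that the three contributions combine with the correct scalar coefficient $s + t + st$.
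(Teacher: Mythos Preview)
Your proposal is correct and matches the paper's own proof essentially line for line: distribute the product into $AA^T$ plus three terms, reduce each of those three terms to $n\,\ava\,\ava^T$ via Lemma~\ref{lemma:3}, and collect the scalar coefficients into $st+s+t$. The only cosmetic difference is that the paper abbreviates $\avaonen$ as $M$ before expanding.
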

\begin{proof}
Let $M = \avaonen$. Then by distributing and applying Lemma~\ref{lemma:3},
\begin{align*}
\paren{A + s\paren{\avaonen}}\paren{A + t\paren{\avaonen}}^T
  &= \paren{A + sM}\paren{A + tM}^T \\
  &= \paren{A+sM}\paren{A^T + tM^T} \\
  &= AA^T + sMA^T + tAM^T + stMM^T \\
  &= AA^T + ns\paren{\ava\,\ava^T} + nt\paren{\ava\,\ava^T} + nst\paren{\ava\,\ava^T} \\
  &= AA^T + n\paren{st + s + t}\paren{\ava\,\ava^T}.
\end{align*}
\end{proof}

\section{Update Theorems}
\label{section:update-theorems}

We now restate and prove the theorem given in the introduction.

\begin{restatetheorem}[Rank-$k$ covariance matrix downdate]{thm:downdate}
Let $X_1$ be an $m\times n$ matrix with covariance matrix $S_1$ and let $Y$ be
an $m \times k$ matrix where the columns of $Y$ are a subset of the columns of
$X_1$. Let $X_2$ be formed by deleting the columns of $Y$ from $X_1$. Let the
mean columns of $X_1$, $X_2$, and $Y$ be $\avx_1$, $\avx_2$, and $\avy$,
respectively. Then the covariance matrix of $X_2$ is given by the rank-$k$
downdate %
\begin{equation}
\label{eq:downdate}
\paren{n-k-1}S_2 = \paren{n-1}S_1 - KK^T
\end{equation}
where
\begin{equation}
\label{eq:K-in-downdate}
K = Y - \paren{\avy \pm \sqrt{\frac{n}{n-k}}\paren{\avy - \avx_1}\hspace{-2pt}} \vone_k.
\end{equation}
\end{restatetheorem}

\begin{proof}
The calculations for the original and subsequent covariance matrices are
\begin{align*}
(n-1)S_1   &= \paren{X_1 - \avx_1 \, \vone_{n}} \paren{X_1 - \avx_1 \, \vone_{n}}^T \\
(n-k-1)S_2 &= \paren{X_2 - \avx_2 \, \vone_{n-k}} \paren{X_2 - \avx_2 \, \vone_{n-k}}^T\hspace{-4pt}.\hspace{4pt}
\end{align*}
The goal is to reuse as much of the calculation of $S_1$ as possible in the
calculation in $S_2$. The mean column is going to be subtracted from each column
of $X_2$ to calculate the covariance matrix. Thus, it will make no difference if
we first shift $X_2$ by subtracting the same vector from each column (before
calculating and subtracting the mean); in particular, we can subtract $\avx_1$
from each column:
\begin{align*}
(n&-k-1)S_2 \\
&= \paren{X_2 - \avx_2 \, \vone_{n-k}} \paren{X_2 - \avx_2 \, \vone_{n-k}}^T \\
&= \paren{\paren{X_2 - \avx_1 \, \vone_{n-k}} - \paren{\avx_2 -  \avx_1}\vone_{n-k}}
   \paren{\paren{X_2 - \avx_1 \, \vone_{n-k}} - \paren{\avx_2 - \avx_1}\vone_{n-k}}^T \\
&= \paren{A - \ava\, \vone_{n-k}} \paren{A - \ava\, \vone_{n-k}}^T\hspace{-4pt},\hspace{4pt}
\end{align*}
where $\ava = \paren{\avx_2 - \avx_1}$ is the mean column of %
$A = X_2 - \avx_1 \, \vone_{n-k}$. Applying the special case~(\ref{eq:l42}) of
Lemma~\ref{lemma:4} gives
\begin{align}
(n-k-1)S_2
  &= AA^T - \paren{n-k}\ava\, \ava^T,
\label{eq:Q2-initial}
\end{align}
where the $(n-k)$ factor is due to the number of columns in $A$ (which is the
same size as $X_2$). Using Lemma~\ref{lemma:mean-update}, we can write $\ava$ as
\begin{equation}
\ava
= \avx_2 - \avx_1
= \frac{n\avx_1 - k\avy}{n-k}  - \avx_1
= \paren{\frac{-k}{n-k}}\paren{\avy - \avx_1}.
\label{eq:vecta}
\end{equation}
Note that the columns of $A$ are a subset of the columns of $X_1 - \avx_1 \,
\vone_{n}$; the columns that have been removed are $Y - \avx_1 \, \vone_k$. Thus, we can use Lemma~\ref{lemma:2} to rewrite $AA^T$ as
\begin{align}
AA^T
  &= \paren{X_2 - \avx_1\, \vone_{n-k}}\paren{X_2 - \avx_1\, \vone_{n-k}}^T
\nonumber \\
  &= (X_1 - \avx_1 \, \vone_{n})(X_1 - \avx_1 \, \vone_{n})^T
    - (Y - \avx_1 \, \vone_{k})(Y - \avx_1 \, \vone_{k})^T \nonumber \\
  &= (n-1)S_1 - (Y - \avx_1 \, \vone_{k})(Y - \avx_1 \, \vone_{k})^T.
\label{eq:AA}
\end{align}

Combining (\ref{eq:vecta}) and (\ref{eq:AA}) with the right-hand side of
(\ref{eq:Q2-initial}),
\begin{align}
(n&-k-1)S_2 \nonumber \\
  &= (n-1)S_1 - (Y - \avx_1 \, \vone_{k})(Y - \avx_1 \, \vone_{k})^T
  - \tfrac{k^2}{n-k}\paren{\avy - \avx_1}\paren{\avy-\avx_1}^T
  \nonumber \\
  &= (n-1)S_1 -
    \paren{BB^T + \tfrac{k^2}{n-k}\,\avb\, \avb^T},
\label{eq:Q2}
\end{align}
where $\avb = \avy - \avx_1$ is the mean column of $B = Y - \avx_1\vone_k$. The
last term in (\ref{eq:Q2}) is in the form of the right hand side of
(\ref{eq:l41}) in Lemma~\ref{lemma:4} with %
\mbox{$k(st + s + t) = \frac{k^2}{n-k}$} %
since $B$ has $k$ columns. We wish to use the lemma to factor (\ref{eq:Q2}) as %
$(n-1)S_1 - KK^T$ %
where %
$K = B + c\paren{\avb\, \vone_k}$, %
so we set $s = t = c$. Solving for $c$ in %
$k\paren{c^2 + 2c} = \frac{k^2}{n-k}$ %
gives %
$c = -1 \pm \sqrt{\frac{n}{n-k}}$, and we now have
\begin{align*}
(n&-k-1)  S_2 = (n-1)S_1 - KK^T
\end{align*}
where
\begin{align*}
K  = {B + c\paren{\avb \vone_k}}
   &= {Y - \avx_1\vone_k + c\paren{\avy - \avx_1} \vone_k} \\
   &= {Y - \paren{\avy \pm \sqrt{\frac{n}{n-k}}\paren{\avy - \avx_1}\hspace{-2pt}} \vone_k}.
\end{align*}
\end{proof}

Note that the equation $k(st + s + t) = \frac{k^2}{n-k}$ has many different
solutions. For example, we can also choose $s=0$ and $t = \frac{k}{n-k}$ and
then apply Lemma~\ref{lemma:4} to arrive at
\begin{align}
  \label{eq:mixed-update}
\paren{n-k-1}S_2
  &= \paren{n-1}S_1 - \paren{Y-\avx_1\vone_k}
    \paren{Y - \avx_1\vone_k + \tfrac{k}{n-k}\paren{\avy - \avx_1}\vone_k}^T \nonumber \\
  &= \paren{n-1}S_1 - \paren{Y-\avx_1\vone_k}
    \paren{Y - \tfrac{1}{n-k}\paren{n\avx_1 -k\avx_1 - k\avy + k\avx_1}\vone_k}^T \nonumber \\
  &= \paren{n-1}S_1 - \paren{Y-\avx_1\vone_k}
    \paren{Y - \avx_2\vone_k}^T
\end{align}
which is~(\ref{eq:bennett-update}) generalized to the
entire covariance matrix.

\begin{restatecorollary}[Rank-$k$ covariance matrix update]{cor:update}
Let $X_1$ be an $m\times n$ matrix with covariance matrix $S_1$ and let $Y$
be an $m \times k$ matrix.  Let $X_2$ be formed by appending the columns of $Y$
to $X_1$. Then the covariance matrix of $X_2$ is given by the rank-$k$
update
\begin{equation*}
\paren{n+k-1}S_2 = \paren{n-1}S_1 + KK^T
\end{equation*}
where %
\begin{align}
K &= Y - \paren{\avy \pm \sqrt{\frac{n+k}{n}}\paren{\avy - \avx_2}\hspace{-2pt}} \vone_k
  \label{eq:update-K1} \\
  &= Y - \paren{\avy \pm \sqrt{\frac{n}{n+k}}\paren{\avy - \avx_1}\hspace{-2pt}} \vone_k.
  \label{eq:update-K2}
\end{align}
\end{restatecorollary}

\begin{proof}
Since the $k$ columns of $Y$ are a subset of the $n+k$ columns of $X_2$, the
update equation with $K$ as stated in~(\ref{eq:update-K1}) follows from
Theorem~\ref{thm:downdate} by swapping the roles of $X_1$ and $X_2$. The
alternate equation for $K$ in~(\ref{eq:update-K2}) follows by using Lemma
\ref{lemma:mean-update} to substitute for $\avx_2$ and simplifying.
\end{proof}

Using the equations in this section, updating and downdating the covariance
matrix both require ${(k+1)m^2} + {(3k+4)m} + 4$ operations,%
\footnote{The number of multiplications is %
  \mbox{$\paren{(k+2)m^2 + (k+6)m + 2}\hspace{-1pt}/2$} %
  and the number of additions is %
  \mbox{$\paren{km^2 + (5k+2)m + 6}\hspace{-1pt}/2$}.\vspace{3pt}} %
plus a single square root. A~naive calculation of the new covariance matrix, on the
other hand, requires %
${(n \pm k)m^2} + {(2n + (1\pm 2)k + 2)m} + 3$
operations.%
\footnote{The number of multiplications is %
  \mbox{$\paren{(n \pm k+1)m^2 + (n \pm k+5)m}\hspace{-1pt}/2$} %
  and the number of additions is %
  \mbox{$\paren{(n \pm k-1)m^2 + (3n +(2 \pm 3)k-1)m + 2}\hspace{-1pt}/2$}.\vspace{3pt}} %

\section{Mixed updates and downdates}
\label{section:mixed}

When updating a statistic to include new observations, it is common to want to
remove other observations at the same time. Using the update and downdate
equations from Section~\ref{section:update-theorems}, it is possible to perform
a mixed update/downdate by simply performing an update and downdate in either
order while skipping the intermediate scaling of the covariance matrix. For
example, using the form of the covariance matrix update expressed in
(\ref{eq:mixed-update}), an update followed by a downdate is given by
\begin{alignat*}{3}
\paren{n+k_u-1}S_u &=\, &(n-1) &S_1 &+ &\paren{Y_u-\avx_1\vone_{k_u}} \paren{Y_u - \avx_u\vone_{k_u}}^T \\
\paren{n+k_u-k_d-1}S_2 &=\, &(n+k_u-1) &S_u &- &\paren{Y_d-\avx_u\vone_{k_d}} \paren{Y_d - \avx_2\vone_{k_d}}^T
\end{alignat*}
where the matrix $Y_u$ holds the $k_u$ update observations with mean $\avy_u$;
the downdate equivalents are $Y_d$, $k_d$, and $\avy_d$; and %
$\avx_u$ is the average of the updated data matrix ${X_u = [ X_1 \,\,\, Y_u]}$.

The combined update/downdate
\begin{align}
  \label{eq:obvious-mixed-update}
  \paren{n+k_u-k_d-1}S_2 = \paren{n-1}S_1 \nonumber 
  &+ \paren{Y_u-\avx_1\vone_{k_u}} \paren{Y_u - \avx_u\vone_{k_u}}^T \\
  &- \paren{Y_d-\avx_u\vone_{k_d}} \paren{Y_d - \avx_2\vone_{k_d}}^T
\end{align}
is clearly more efficient. In fact, it is also possible to calculate $S_2$ without
referring to the intermediate data mean $\avx_u$.

\begin{theorem}[Mixed update/downdate]\label{thm:mixed}
  Let $X_1$ be an $m\times n_1$ matrix with covariance matrix $S_1$ and let
  $Y_u$ and $Y_d$ be $m \times k_u$ and $m \times k_d$ matrices where the
  columns of $Y_d$ are a subset of the columns of $X_1$. Let $X_2$ be the
  $m\times n_2$ data matrix formed by deleting the columns of $Y_d$ from $X_1$
  and appending the columns of $Y_u$. Let the mean columns of $X_1$ and $X_2$ be
  $\avx_1$ and $\avx_2$, respectively. Then the covariance matrix of $X_2$ is
  given by
\begin{align*}
\label{eq:mixed}
  \paren{n_2-1}S_2 = \paren{n_1-1}S_1
  &+ \paren{Y_u-\avx_1\vone_{k_u}} \paren{Y_u - \avx_2\vone_{k_u}}^T \\
  &- \paren{Y_d-\avx_1\vone_{k_d}} \paren{Y_d - \avx_2\vone_{k_d}}^T\hspace{-4pt}.\hspace{4pt}
\end{align*}
\end{theorem}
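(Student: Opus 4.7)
The plan is to give a direct calculation that parallels the proof of Theorem~\ref{thm:downdate}: write out $(n_2-1)S_2$ as a product, re-center the data by the old mean $\avx_1$ rather than by $\avx_2$, and then read off the two $Y_u$ and $Y_d$ contributions from the preceding lemmas.

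Concretely, I would first shift $X_2$: setting $A = X_2 - \avx_1\vone_{n_2}$, whose mean column is $\avx_2 - \avx_1$, equation~(\ref{eq:l42}) of Lemma~\ref{lemma:4} gives
\[
(n_2-1)S_2 = AA^T - n_2(\avx_2 - \avx_1)(\avx_2 - \avx_1)^T.
\]
The columns of $A$ are obtained from those of $X_1 - \avx_1\vone_{n_1}$ by deleting the columns of $Y_d - \avx_1\vone_{k_d}$ and appending those of $Y_u - \avx_1\vone_{k_u}$, so two applications of Lemma~\ref{lemma:2} yield
\[
AA^T = (n_1-1)S_1 + (Y_u - \avx_1\vone_{k_u})(Y_u - \avx_1\vone_{k_u})^T - (Y_d - \avx_1\vone_{k_d})(Y_d - \avx_1\vone_{k_d})^T.
\]

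To match the asymmetric outer products appearing in the theorem, I would expand $(Z - \avx_1\vone_k)(Z - \avx_2\vone_k)^T$ for $Z \in \{Y_u, Y_d\}$ with $k$ columns and mean column $\avg{\vect z}$; equation~(\ref{eq:l31}) of Lemma~\ref{lemma:3} reduces the cross term to $k(\avg{\vect z} - \avx_1)(\avx_1 - \avx_2)^T$. Substituting back, all of the $m\times m$ outer products that are quadratic in the data align with the claim, and what remains reduces to the single vector identity
\[
n_2(\avx_2 - \avx_1)(\avx_2 - \avx_1)^T = \bigl[k_u(\avy_u - \avx_1) - k_d(\avy_d - \avx_1)\bigr](\avx_2 - \avx_1)^T,
\]
which holds because Lemma~\ref{lemma:mean-update}, applied in succession, gives $n_2\avx_2 = n_1\avx_1 + k_u\avy_u - k_d\avy_d$; using $n_2 - n_1 = k_u - k_d$, both $n_2(\avx_2 - \avx_1)$ and $k_u(\avy_u - \avx_1) - k_d(\avy_d - \avx_1)$ collapse to $k_u\avy_u - k_d\avy_d - (k_u - k_d)\avx_1$.

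The main obstacle is purely bookkeeping: four $m\times m$ outer products and their scalar coefficients must be tracked through the expansion in the third step, and a single misplaced sign would spoil the final cancellation. An alternative route would be to start from equation~(\ref{eq:obvious-mixed-update}) and show that the intermediate mean $\avx_u$ cancels by Lemma~\ref{lemma:mean-update}, but that essentially duplicates work already done for Theorem~\ref{thm:downdate}, so the direct derivation above seems the cleanest path.
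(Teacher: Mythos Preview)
Your proof is correct, and it takes a genuinely different route from the paper's. The paper starts from the sequential formula~(\ref{eq:obvious-mixed-update})---itself built out of the asymmetric variant~(\ref{eq:mixed-update}) derived during the proof of Theorem~\ref{thm:downdate}---and then shows that the intermediate mean $\avx_u$ can be eliminated: it rewrites $\avx_u$ as $\avx_2+(\avx_u-\avx_2)$ in the update term and as $\avx_1+(\avx_u-\avx_1)$ in the downdate term, applies Lemma~\ref{lemma:3}, and finishes by checking the rank-one identity $k_u(\avy_u-\avx_1)(\avx_u-\avx_2)^T=k_d(\avx_u-\avx_1)(\avy_d-\avx_2)^T$ via Lemma~\ref{lemma:mean-update}. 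You instead rerun the shift-and-split argument of Theorem~\ref{thm:downdate} directly on $X_2$, never introducing $\avx_u$ at all; the only mean identity you need is $n_2(\avx_2-\avx_1)=k_u(\avy_u-\avx_1)-k_d(\avy_d-\avx_1)$, which is exactly equation~(\ref{eq:replacement1}) used later in Theorem~\ref{thm:rank-k-mixed}. Your route is self-contained and avoids dependence on~(\ref{eq:mixed-update}); the paper's route, by contrast, makes explicit how the ``obvious'' sequential formula collapses, which motivates the subsequent search for the symmetric rank-$k$ form in Theorem~\ref{thm:rank-k-mixed}. You correctly anticipated the paper's approach in your final paragraph and opted against it; either argument is fine here.
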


\begin{proof}
  We begin by noticing that the combined update/downdate in equation
  (\ref{eq:obvious-mixed-update}) is nearly in the form we would like;
  specifically, the first term of $\paren{Y_u-\avx_1\vone_{k_u}} \paren{Y_u -
    \avx_u\vone_{k_u}}^T$ is in the desired form. To get $\avx_2$ in
  the second term, we do the following:
\begin{align*}
\paren{Y_u-\avx_1\vone_{k_u}} \paren{Y_u -\avx_u\vone_{k_u}}^T 
&= \paren{Y_u-\avx_1\vone_{k_u}} \paren{Y_u - \paren{\avx_2 + \paren{\avx_u - \avx_2}}\vone_{k_u}}^T \\
&= \paren{Y_u-\avx_1\vone_{k_u}} \paren{\paren{Y_u - \avx_2\vone_{k_u}} - \paren{\avx_u - \avx_2}\vone_{k_u}}^T \\
&=\paren{Y_u-\avx_1\vone_{k_u}} \paren{Y_u - \avx_2\vone_{k_u}}^T - \paren{Y_u-\avx_1\vone_{k_u}}\paren{\avx_u - \avx_2}^T\hspace{-4pt}.\hspace{4pt}
\intertext{We can then apply Lemma~\ref{lemma:3} to get}
\paren{Y_u-\avx_1\vone_{k_u}} \paren{Y_u -\avx_u\vone_{k_u}}^T
&=\paren{Y_u-\avx_1\vone_{k_u}} \paren{Y_u - \avx_2\vone_{k_u}} -
k_u\paren{\avy_u - \avx_1}\paren{\avx_u - \avx_2}^T\hspace{-4pt}.\hspace{4pt}
\end{align*}
Similarly, the downdate portion of the combined update/downdate can be written
as
\begin{align*}
\paren{Y_d-\avx_u\vone_{k_d}} \paren{Y_d - \avx_2\vone_{k_d}}^T
&= \paren{Y_d-\paren{\avx_1 + \paren{\avx_u - \avx_1}}\vone_{k_d}} \paren{Y_d - \avx_2\vone_{k_d}}^T \\
&= \paren{Y_d-\avx_1\vone_{k_d}} \paren{Y_d - \avx_2\vone_{k_d}}^T -
k_d\paren{\avx_u - \avx_1}\paren{\avy_d - \avx_2}^T\hspace{-4pt}.
\end{align*}
Thus, all that remains is to show
\begin{equation}
\label{eq:mean-equality}
    k_u\paren{\avy_u - \avx_1}\paren{\avx_u - \avx_2}^T = k_d\paren{\avx_u -
\avx_1}\paren{\avy_d - \avx_2}^T\hspace{-4pt}.
\end{equation}
By Lemma~\ref{lemma:mean-update}, we have 
\begin{align*}
\avx_u = \frac{n_1\avx_1 + k_u\avy_u}{n_1+k_u} \qquad
\text{and} \qquad
\avx_2 =
\frac{(n_1+k_u)\avx_u - k_d\avy_d}{n_1+k_u-k_d}
\end{align*}
which give the following:
\begin{align*}
k_u(\avy_u - \avx_1) &= (n_1+k_u)(\avx_u - \avx_1) \\
k_d(\avy_d - \avx_2) &= (n_1+k_u)(\avx_u - \avx_2).
\end{align*}
Substituting these into (\ref{eq:mean-equality}) shows the equality to be true.
\end{proof}

The mixed update/downdate theorem is a more direct and aesthetically pleasing
way of calculating the modified covariance matrix
equation~(\ref{eq:obvious-mixed-update}), and we find it more surprising
than~(\ref{eq:mixed-update}). However, it requires subtracting two different
means ($\avx_1$ and $\avx_2$) from $Y_u$ as well as $Y_d$. A more efficient
mixed update/downdate would be of the form
\begin{equation}
(n + k_u - k_d - 1)S_2 = (n-1)S_1 + (Y_u - \vect{z}_u)(Y_u - \vect{z}_u)^T - (Y_d - \vect{z}_d)(Y_d - \vect{z}_d)^T.
\end{equation}
An additional slight improvement would be if $\vect{z}_u = c\vect{z}_d$ for some
scalar $c$, and the best that we can hope for is $\vect{z}_u = \vect{z}_d$.

It turns out that such a factorization is possible.
Theorem~\ref{thm:rank-k-mixed} subsumes the rank-$k$ update and downdate
equations from Section~\ref{section:update-theorems}.

\begin{theorem}[Rank-$k$ mixed update/downdate]
  \label{thm:rank-k-mixed}
  Let $X_1$ be an $m\times n_1$ matrix with covariance matrix $S_1$ and let
  $Y_u$ and $Y_d$ be $m \times k_u$ and $m \times k_d$ matrices where the
  columns of $Y_d$ are a subset of the columns of $X_1$. Let $X_2$ be the
  $m\times n_2$ data matrix formed by deleting the columns of $Y_d$ from $X_1$
  and appending the columns of $Y_u$. Let the mean columns of $X_1$ and $X_2$ be
  $\avx_1$ and $\avx_2$, respectively. Then the covariance matrix of $X_2$ is
  given by the rank-$k$ ($k = k_u + k_d$) mixed update/downdate %
\begin{align}
\label{eq:rank-k-mixed}
  \paren{n_2-1}S_2 = \paren{n_1-1}S_1 + K_uK_u^T - K_dK_d^T
\end{align}
where
$K_u = \paren{Y_u - \vect{z}\vone_{k_u}}$, $K_d = \paren{Y_d - \vect{z}\vone_{k_d}}$,
$z = \avx_1 - c(\avx_2 - \avx_1)$, and
\begin{equation*}
c = \begin{cases}
   \dfrac{n_2 \pm \sqrt{n_1n_2}}{n_2 - n_1} &\text{if $k_u \neq k_d$}
\vspace{.75em}
 \\
   \hfill \frac{1}{2} \hfill & \text{if $k_u = k_d$}.
\end{cases}
\end{equation*}
\end{theorem}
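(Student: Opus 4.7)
The plan is to reduce Theorem~\ref{thm:rank-k-mixed} to Theorem~\ref{thm:mixed} by consolidating the two distinct shifts $\avx_1$ and $\avx_2$ appearing in the mixed update/downdate formula into a single vector $z$ common to both $Y_u$ and $Y_d$. The starting observation is that, for any $m\times k$ matrix $Y$ with mean column $\avy$ and any compatible shifts $w_1$, $w_2$, writing $Y = (Y - \avy \vone_k) + \avy \vone_k$ and applying Lemma~\ref{lemma:3} to the cross-terms (which involve the mean column of the mean-centered matrix $Y - \avy \vone_k$ and hence vanish) gives
\begin{equation*}
\paren{Y - w_1 \vone_k}\paren{Y - w_2 \vone_k}^T = \paren{Y - \avy\, \vone_k}\paren{Y - \avy\, \vone_k}^T + k\paren{\avy - w_1}\paren{\avy - w_2}^T.
\end{equation*}
Applying this identity to the two mixed-shift products in Theorem~\ref{thm:mixed} (with $w_1 = \avx_1$, $w_2 = \avx_2$) and to the two single-shift products in the desired rank-$k$ form (with $w_1 = w_2 = z$), the centered terms $(Y_u - \avy_u\vone_{k_u})(Y_u - \avy_u\vone_{k_u})^T$ and $(Y_d - \avy_d\vone_{k_d})(Y_d - \avy_d\vone_{k_d})^T$ appear on both sides and cancel, so the theorem reduces to a purely vector-algebraic identity in the rank-one correction terms.

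I would then parametrize $z = \avx_1 + \lambda(\avx_2 - \avx_1)$ for a scalar $\lambda$ (related to the theorem's $c$ up to sign) and introduce shorthand $\vect{u} = \avy_u - \avx_1$, $\vect{d} = \avy_d - \avx_1$, $\vect{e} = \avx_2 - \avx_1$, so that $\avy_u - \avx_2 = \vect{u} - \vect{e}$, $\avy_u - z = \vect{u} - \lambda\vect{e}$, and analogously for $Y_d$. Expanding the rank-one corrections reduces the problem to verifying
\begin{equation*}
k_u\paren{(\lambda-1)\vect{u}\vect{e}^T + \lambda\vect{e}\vect{u}^T - \lambda^2 \vect{e}\vect{e}^T} = k_d\paren{(\lambda-1)\vect{d}\vect{e}^T + \lambda\vect{e}\vect{d}^T - \lambda^2 \vect{e}\vect{e}^T}.
\end{equation*}
The critical simplification comes from two applications of Lemma~\ref{lemma:mean-update}: from $n_2 \avx_2 = n_1\avx_1 + k_u\avy_u - k_d\avy_d$ one extracts the mean-conservation identity $k_u \vect{u} - k_d \vect{d} = n_2 \vect{e}$. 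Substituting this into both the row- and column-factors above collapses every remaining term to a scalar multiple of $\vect{e}\vect{e}^T$, and the whole identity is equivalent to the scalar equation $(n_2 - n_1)\lambda^2 - 2 n_2 \lambda + n_2 = 0$.

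The final step is to solve this quadratic in $\lambda$. When $k_u \neq k_d$ we have $n_2 \neq n_1$, and the quadratic formula produces $\lambda = (n_2 \pm \sqrt{n_1 n_2})/(n_2 - n_1)$, which matches the stated value of $c$ once the sign convention in $z = \avx_1 - c(\avx_2 - \avx_1)$ is reconciled. When $k_u = k_d$ the $\lambda^2$-coefficient vanishes, the equation degenerates to the linear equation $-2n_2\lambda + n_2 = 0$, and $\lambda = 1/2$, recovering the special case. I expect the main obstacle to be purely bookkeeping: correctly expanding the mixed-shift products, tracking the asymmetric cross-terms $\vect{u}\vect{e}^T$ versus $\vect{e}\vect{u}^T$ so that the mean-conservation identity can be applied on both sides of each product, and reconciling signs so that the quadratic solutions agree with the stated formula for $c$.
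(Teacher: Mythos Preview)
Your proposal is correct and lands on exactly the same scalar quadratic $(n_2-n_1)\lambda^2 - 2n_2\lambda + n_2 = 0$ that the paper derives, via the same reduction to Theorem~\ref{thm:mixed} and the same mean-conservation identity $k_u\vect{u} - k_d\vect{d} = n_2\vect{e}$. The organizational difference is in how the expansion is set up: the paper writes $z = \avx_1 + c(\avx_2-\avx_1) = \avx_2 + (c-1)(\avx_2-\avx_1)$ and distributes $K_uK_u^T$ directly so that the leading term is already $(Y_u-\avx_1\vone)(Y_u-\avx_2\vone)^T$, then invokes both identities~(\ref{eq:replacement1}) and~(\ref{eq:replacement2}); you instead first center everything at $\avy_u,\avy_d$ via your preliminary identity, which kills the matrix-product terms outright and leaves only rank-one corrections to compare. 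Your route is slightly cleaner bookkeeping-wise (one mean identity plus $k_u-k_d=n_2-n_1$ suffices), while the paper's split makes the connection to Theorem~\ref{thm:mixed} visible term-by-term; both are the same argument in substance. Your caveat about the sign of $c$ is warranted: the paper's proof actually uses $z=\avx_1+c(\avx_2-\avx_1)$, matching your $\lambda$, so no real reconciliation is needed beyond noting the $\pm$ in the quadratic formula absorbs the discrepancy with the theorem statement.
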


\begin{proof}
  From the definition of $\avx_2$,
  \begin{equation*}
    \avx_2 - \avx_1 
    = \frac{{k_u\paren{\avy_u-\avx_1} - k_d\paren{\avy_d - \avx_1}}}{n_2}
  \end{equation*}
and
  \begin{equation*}
    0 = \avx_2 - \avx_2 = n_1\paren{\avx_1 - \avx_2} + k_u\paren{\avy_u - \avx_2} - k_d\paren{\avy_d - \avx_2},
  \end{equation*}
which give these two equalities:
  \begin{align}
    n_2\paren{\avx_2 - \avx_1} &= {k_u\paren{\avy_u-\avx_1} - k_d\paren{\avy_d - \avx_1}} \label{eq:replacement1}
\\
    n_1\paren{\avx_2 - \avx_1} &= {k_u\paren{\avy_u-\avx_2} - k_d\paren{\avy_d - \avx_2}}. \label{eq:replacement2}
  \end{align}
Applying Lemma~\ref{lemma:3} with a compatible vector $\vect{a}$, these
become:
\begin{align*}
  n_2\paren{\paren{\avx_2 - \avx_1}\vone_{n_2}}(\vect{a}\vone_{n_2})^T &=
\paren{Y_u - \avx_1\vone_{k_u}}\paren{\vect{a}\vone_{k_u}}^T
- {\paren{Y_d - \avx_1\vone_{k_d}}\paren{\vect{a}\vone_{k_d}}^T} \\
  n_2(\vect{a}\vone_{n_2})\paren{\paren{\avx_2 - \avx_1}\vone_{n_2}}^T &=
\paren{\vect{a}\vone_{k_u}}\paren{Y_u - \avx_1\vone_{k_u}}^T
- \paren{\vect{a}\vone_{k_d}}{\paren{Y_d - \avx_1\vone_{k_d}}^T}\hspace{-4pt}.
\end{align*}
Motivated by these factorizations, along with the form of the downdate seen in
(\ref{eq:K-in-downdate}), we define %
$K_u= \paren{Y_u - \paren{\avx_1 + c\paren{\avx_2 - \avx_1}}\vone_{k_u}}$ %
and examine $K_uK_u^T$:
\begin{align}
  K_uK_u^T
  &= \paren{Y_u - \paren{\avx_1 + c\paren{\avx_2 - \avx_1}}\vone_{k_u}}
    \paren{Y_u - \paren{\avx_1 + c\paren{\avx_2 - \avx_1}}\vone_{k_u}}^T \nonumber \\
  &= \paren{Y_u - \paren{\avx_1 + c\paren{\avx_2 - \avx_1}}\vone_{k_u}} \paren{Y_u- \paren{\avx_2 + \paren{c-1}\paren{\avx_2 - \avx_1}}\vone_{k_u}}^T \nonumber \\
  &= \paren{\paren{Y_u - \avx_1\vone_{k_u}} - c\paren{\avx_2 - \avx_1}\vone_{k_u}} \paren{\paren{Y_u- \avx_2\vone_{k_u}} - \paren{c-1}\paren{\avx_2 - \avx_1}\vone_{k_u}}^T\hspace{-4pt}. \nonumber \\
\intertext{Then applying Lemma~\ref{lemma:3},}
  \begin{split}
  K_uK_u^T &=\, \paren{Y_u - \avx_1\vone_{k_u}}\paren{Y_u - \avx_2\vone_{k_u}}^T + \paren{c^2-c}k_u\paren{\avx_2 - \avx_1}\paren{\avx_2-\avx_1}^T \\
    &\qquad -ck_u\paren{\avx_2 - \avx_1}\paren{\avy_u - \avx_2}^T - \paren{c-1}k_u\paren{\avy_u - \avx_1}\paren{\avx_2 - \avx_1}^T\hspace{-4pt}.
    \label{eq:mixed-Ku}
  \end{split}
\intertext{Likewise, if
$K_d= \paren{Y_d - \paren{\avx_1 + c\paren{\avx_2 - \avx_1}}\vone_{k_d}}$ %
then}
    \begin{split}
    K_dK_d^T
      &= \paren{Y_d - \avx_1\vone_{k_d}}\paren{Y_d - \avx_2\vone_{k_d}}^T + \paren{c^2-c}k_d\paren{\avx_2 - \avx_1}\paren{\avx_2-\avx_1}^T \\
      & \qquad -ck_d\paren{\avx_2 - \avx_1}\paren{\avy_d - \avx_2}^T - \paren{c-1}k_d\paren{\avy_d - \avx_1}\paren{\avx_2 - \avx_1}^T\hspace{-4pt}.
    \label{eq:mixed-Kd}
    \end{split}
\end{align}
Subtracting the first term on the right side of~(\ref{eq:mixed-Kd}) from the
first term of~(\ref{eq:mixed-Ku}) is equal to the mixed update/downdate of
Theorem~\ref{thm:mixed}. Therefore, $K_uK_u^T - K_dK_d^T$ is the mixed
update/downdate as long as all of the remaining terms of~(\ref{eq:mixed-Ku})
cancel those of~(\ref{eq:mixed-Kd}). The difference between these terms is
\begin{align}
   &\paren{c^2-c}\paren{k_u-k_d}\paren{\avx_2 - \avx_1}\paren{\avx_2-\avx_1}^T
   - c\paren{\avx_2 - \avx_1}\paren{k_u\paren{\avy_u - \avx_2} - k_d\paren{\avy_d - \avx_2}}^T \nonumber \\
  &\qquad- \paren{c-1}\paren{k_u\paren{\avy_u - \avx_1} - k_d\paren{\avy_d - \avx_1}}\paren{\avx_2 - \avx_1}^T\hspace{-4pt}, \nonumber \\
\intertext{ which, after substituting (\ref{eq:replacement1}) and
(\ref{eq:replacement2}), becomes}
  &\paren{c^2-c}\paren{n_2-n_1}\paren{\avx_2 - \avx_1}\paren{\avx_2-\avx_1}^T
    - n_1c\paren{\avx_2 - \avx_1}\paren{\avx_2 - \avx_1}^T \nonumber \\
  &\qquad - n_2\paren{c-1}\paren{\avx_2 - \avx_1}\paren{\avx_2 - \avx_1}^T \nonumber \\
  = &\paren{\paren{c^2-c}\paren{n_2-n_1}- n_1c - (c-1)n_2}
      \paren{\avx_2 - \avx_1}\paren{\avx_2 - \avx_1}^T \nonumber\\
  = &\paren{\paren{n_2 - n_1}c^2 - \paren{2n_2}c + n_2}
      \paren{\avx_2 - \avx_1}\paren{\avx_2 - \avx_1}^T\hspace{-4pt}.
      \label{eq:mixed-rank-k-difference}
\end{align}
Finally, we solve for values of $c$ that would make
~(\ref{eq:mixed-rank-k-difference}) zero, giving $c=\frac{1}{2}$ if $k_u = k_d$
and
in all other cases
\begin{equation*}
c = \frac{2n_2 \pm \sqrt{\paren{2n_2}^2 -
4n_2\paren{n_2-n_1}}}{2\paren{n_2-n_1}} = \frac{n_2 \pm \sqrt{n_1n_2}}{n_2-n_1}.
\end{equation*}
\end{proof}

Briefly, we note that an alternate way of writing~(\ref{eq:rank-k-mixed}). Let
$K = [K_u \,\,\, iK_d]$, where $i$ is the imaginary unit, in which case
$\paren{n_2 - 1}S_2 = \paren{n_1-1}S_1 + KK^T$\hspace{-2pt}.

\section{Updating the LDL Factorization of the Covariance Matrix}
\label{section:LDL}

Given the equations for rank-$k$ updates and downdates in
Theorems~\ref{thm:downdate} and~\ref{thm:rank-k-mixed}, it is possible to apply
many existing update theorems and numerical libraries to the covariance matrix.
For example, the Sherman-Morrison-Woodbury formula enables the application of a
rank-$k$ update to a matrix as a rank-$k$ update to the inverse of that
matrix~\cite{hager_updating_1989}. Similarly, a rank-$k$ modification to a
positive semidefinite matrix can be applied directly to the LDL and Cholesky
decompositions, bypassing the calculation of the updated non-factored
matrix~\cite{davis}.

In this section, we will show how the rank-$k$ covariance matrix modifications
can be used to update or downdate the LDL~decomposition of a covariance matrix.
Frequently, the reason to calculate the covariance matrix is to multiply by its
inverse. For numerical stability and optimization, this should typically be
implemented using an alternative computation, such as calculating the LDL~
decomposition and then using forward substitution.

Let $S_1 = L_1D_ 1L_1^T$ be a covariance matrix and its LDL decomposition. As
stated in Section~\ref{sec:introduction}, the covariance matrix is positive
semidefinite, and therefore has both Cholesky and LDL decompositions.%
\footnote{A Cholesky and LDL decomposition exists for any covariance matrix,
  but the decomposition is not guaranteed to be stable, unique, or to exclude
  zero entries along the diagonal unless the covariance matrix is positive
  definite \cite{golub_matrix_computations_1996}.} %
The goal is to find an efficient bulk downdate for the decomposition by taking
advantage of the rank-$k$ downdate to $S_1$ provided by
Theorem~\ref{thm:downdate}. As is the case with Corollary~\ref{cor:update}, the
downdate to the LDL decomposition immediately generalizes to the update as well.

Applying the downdate theorem,
\begin{align*}
    \paren{n-k-1}S_2 = \paren{n-1}L_1D_1L_1^T - KK^T
\end{align*}
where $K$ is as defined in the theorem. Following \cite{gill}, we reuse $L_1$
and $D_1$ to calculate the LDL decomposition of $S_2$. We can write $K = L_1P$
where $P$ is found using forward substitution, arriving at
\begin{align*}
    \paren{n-k-1}S_2 = \paren{n-1}L_1DL_1^T - (L_1P)(L_1P)^T.
\end{align*}
Letting $D' = \paren{n-1}D$ and simplifying,
\begin{align*}
    \paren{n-k-1}S_2 = L_1(D' - PP^T)L_1^T.
\end{align*}
We can find the LDL decomposition of $D'-PP^T$, namely
$D' - PP^T = \widetilde{L}\widetilde{D}\widetilde{L}^T$ (see
\cite{gill} for the proof of existence).
Setting $L_2 = L_1\widetilde{L}$ and $D_2 = \widetilde{D}$ gives $S_2 =
L_2D_2L_2$, the LDL decomposition of the downdated covariance matrix.

Gill~et~al.\ introduce a number of algorithms in \cite{gill} for updating the
LDL factorization that can be applied to Theorem~\ref{thm:downdate} and
Corollary~\ref{cor:update}. A one-pass algorithm of Method~C1 in~\cite{gill} is
provided by Algorithm~1 in~\cite{davis}. We introduce a modification of this
algorithm that provides a rank-$k$ update or downdate to the LDL factorization
of the covariance matrix. The modifications incorporate the calculation of $K$
from~(\ref{eq:K-in-downdate}) into the algorithm so that only one column of $K$
is required to be in memory at any time.

Let $X_1$, $Y$, $X_2$, and $S_1$ be as defined in Theorem \ref{thm:downdate} (or
Corollary \ref{cor:update}). Let the mean columns of $X_1$ and $Y$ be denoted as
$\avx_1$ and $\avy$, respectively. Let $d_{ij}, y_{ij},$ and $\ell_{ij}$ be the
$i,j$th entries of $D, Y$, and $L$, respectively. Given the factorization $S_1 =
LDL^T$, the matrices $L$ and $D$ are overwritten with the new factors of $S_2$
by Algorithm~\ref{alg:downdate}.

\begin{algorithm}[hb]
\caption{Covariance matrix LDL factorization update or downdate}
\label{alg:downdate}
\begin{algorithmic}
    \STATE $\phi = 1$ for update or $-1$ for downdate
    \STATE $D = (n-1)D$
    \STATE $c = \sqrt{\frac{n}{n + \phi k}}$
    \STATE $\boldsymbol{z } = \avy - c(\avy -\avx_1)$
    \FOR{$j=1$ to $k$}
        \STATE $\alpha = \phi$
        \FOR{$i = 1$ to $m$}
            \STATE $y_{ij} = y_{ij} + z_i$
        \ENDFOR
        \FOR{$i = 1$ to $m$}
            \STATE $\widetilde d = d_{ii}$
            \STATE $\gamma = y_{ij}/(\alpha \hspace{0.08em} d_{ii} + y_{ij}^2)$
            \STATE $d_{ii} = d_{ii} + y_{ij}^2/\alpha$
            \STATE $\alpha = \alpha + y_{ij}^2/\widetilde d$
            \FOR{$p = i+1$ to $m$}
                \STATE $y_{pj} = y_{pj} - y_{ij}\hspace{0.08em}\ell_{pi}$
                \STATE $\ell_{pi} = \ell_{pi} + \gamma\hspace{0.08em} y_{pj}$
            \ENDFOR
        \ENDFOR
    \ENDFOR
    \STATE $D = \tfrac{1}{n + \phi k - 1}D$
\end{algorithmic}
\end{algorithm}

Algorithm~\ref{alg:downdate} performs $2km^2 + (8k+5)m + 4$ total operations,%
\footnote{The number of multiplications is %
  \mbox{$km^2 + (4k+4)m + 1$} %
  and the number of additions is %
  \mbox{$km^2 + (4k+1)m +1$.}} %
plus a single square root. The naive calculation of the LDL decomposition of the
new covariance matrix requires $(n \pm k)^3/3$ operations (Algorithm 4.1.2 in
\cite{golub_matrix_computations_1996}), in addition to the operations required
to update the covariance matrix itself. This algorithm can also be easily
extended to implement the rank-$k$ mixed update/downdate from
Theorem~\ref{thm:rank-k-mixed}; the outer loop runs twice, first iterating over
the columns of $Y_u$ with $\phi=1$, and then over the columns of $Y_d$ with
$\phi=-1$.

\section*{Acknowledgements}
This manuscript has been authored by UT-Battelle,~LLC, under contract
DE-AC05-00OR22725 with the US Department of Energy~(DOE). The US~government
retains and the publisher, by accepting the article for publication,
acknowledges that the US~government retains a nonexclusive, paid-up,
irrevocable, worldwide license to publish or reproduce the published form of
this manuscript, or allow others to do so, for US~government purposes. DOE~will
provide public access to these results of federally sponsored research in
accordance with the DOE~Public Access Plan
(http://energy.gov/downloads/doe-public-access-plan).

This research was supported in part by an appointment to the Oak Ridge National
Laboratory Post-Master's Research Associate Program, sponsored by the
US~Department of Energy and administered by the Oak~Ridge Institute for Science
and Education.

\bibliography{2020-march_tombs-covariance.bib}{}
\bibliographystyle{amsplain}

\end{document}